\theoremstyle{plain}
\newtheorem{thm}{Theorem}[section]
\newtheorem{lem}[thm]{Lemma}
\theoremstyle{definition}
\newtheorem*{rem*}{Remark}
\newcommand{\dd}{\mathrm{d}}
\newcommand{\ee}{\mathrm{e}}
\newcommand{\ii}{\mathrm{i}}
\newcommand{\Z}{\mathbb{Z}}
\newcommand{\T}{\mathbb{T}}
\newcommand{\cA}{\mathcal{A}}
\DeclareSymbolFont{extraup}{U}{zavm}{m}{n}
\DeclareMathSymbol{\varheart}{\mathalpha}{extraup}{86}
\DeclareMathSymbol{\vardiamond}{\mathalpha}{extraup}{87}
\title{Dispersion on certain Cartesian products of graphs}
\author{Ka\"is Ammari and Mostafa Sabri}
\address{LR Analysis and Control of PDEs, LR22ES03, Department of Mathematics, Faculty of
Sciences of Monastir, University of Monastir, 5019 Monastir, Tunisia.}
\email{kais.ammari@fsm.rnu.tn}
\address{Department of Mathematics, Faculty of Science, Cairo University, Giza 12613, Egypt.}
\email{mmsabri@sci.cu.edu.eg}
\subjclass[2010]{34B45, 46N50, 81Q35}
\keywords{Dispersion, Graphs, Schr\"odinger operator, Cartesian products.}
\newlength{\temp@wc@width}
\newlength{\temp@wc@height}
\newcommand{\widecheck}[1]{%
  \setlength{\temp@wc@width}{\widthof{$#1$}}%
  \setlength{\temp@wc@height}{\heightof{$#1$}}%
  #1\hspace{-\temp@wc@width}%
  \raisebox{\temp@wc@height+2pt}[\heightof{$\widehat{#1}$}]%
     {\rotatebox[origin=c]{180}{\vbox to 0pt{\hbox{$\widehat{\hphantom{#1}}$}}}}%
}
\begin{document}

\begin{abstract}
In this short note we prove a sharp dispersive estimate $\|\ee^{\ii tH} f\|_\infty < t^{-d/3}\|f\|_1$ for any Cartesian product $\Z^d\mathop\square G_F$ of the integer lattice and a finite graph. This includes the infinite ladder, $k$-strips and infinite cylinders, which can be endowed with certain potentials.
\end{abstract}

\maketitle

\section{Introduction and main result} 

In this note we discuss dispersion on Cartesian products of the form $\Z^d\mathop\square G_F$, where $G_F$ is a finite graph. The proof of the estimate is quite simple, it combines known estimates on the Bessel function with the Floquet theory of periodic graphs. However, to our knowledge, the phenomenon has not been observed before in the literature.

\medskip

Recall that the Cartesian product $G\mathop\square H$ of two graphs $G$ and $H$ is the graph with vertex set $V(G)\times V(H)$, such that $(u,v)\sim (u',v')$ iff ($u=u'$ and $v\sim v'$) or ($u\sim u'$ and $v=v'$).

\medskip

The product $\Z\mathop\square G_F$ is very easy to visualize. Simply take the integer line $\Z$, replace each vertex by a copy of $G_F$, then connect the matching vertices. If $G_F=P_2$, the $2$-path, this creates an infinite ladder. More generally, if $G_F=P_k$ is a $k$-path, we get an infinite strip of width $k$. 

\begin{figure}[h!]
\begin{center}
\setlength{\unitlength}{1cm}
\thicklines
\begin{picture}(3,3)(-2,-2)
   \put(-5,-2){\line(1,0){10}}
	 \put(-5,-1){\line(1,0){10}}
	 \put(-5,0){\line(1,0){10}}
	 \put(-5,1){\line(1,0){10}}
	 \put(-3,-2){\line(0,1){3}}
	 \put(-1,-2){\line(0,1){3}}
	 \put(1,-2){\line(0,1){3}}
	 \put(3,-2){\line(0,1){3}}
	 \put(-1,-1){\circle*{.2}}
	 \put(-1,0){\circle*{.2}}
	 \put(-3,-1){\circle*{.2}}
	 \put(-3,0){\circle*{.2}}
	 \put(1,-1){\circle*{.2}}
	 \put(1,0){\circle*{.2}}
	 \put(3,-1){\circle*{.2}}
	 \put(3,0){\circle*{.2}}
	 \put(-3,1){\circle*{.2}}
	 \put(-1,1){\circle*{.2}}
	 \put(1,1){\circle*{.2}}
	 \put(3,1){\circle*{.2}}
	 \put(-3,-2){\circle*{.2}}
	 \put(-1,-2){\circle*{.2}}
	 \put(1,-2){\circle*{.2}}
	 \put(3,-2){\circle*{.2}}
\end{picture}
\caption{The $4$-strip, $\Z\mathop\square P_4$.}\label{fig:stri}
\end{center}
\end{figure}

If we take $G_F=C_p$, a $p$-cycle, we get an infinite cylinder.

\begin{figure}[h!]
\begin{center}
\setlength{\unitlength}{1cm}
\thicklines
\begin{picture}(1.3,1.3)(-1.3,-1.3)
   \put(-5,0){\line(1,0){10}}
	 \multiput(-2,-0.5)(-0.2,-0.1){10}{\line(-1,-0.5){0.1}}
 	 \multiput(0,-0.5)(-0.2,-0.1){10}{\line(-1,-0.5){0.1}}
	 \multiput(2,-0.5)(-0.2,-0.1){10}{\line(-1,-0.5){0.1}}
	 \multiput(4,-0.5)(-0.2,-0.1){10}{\line(-1,-0.5){0.1}}
	 \multiput(0,-0.5)(-0.2,0.1){5}{\line(-1,0.5){0.1}}
	 \multiput(2,-0.5)(-0.2,0.1){5}{\line(-1,0.5){0.1}}
	 \multiput(4,-0.5)(-0.2,0.1){5}{\line(-1,0.5){0.1}}
	 \multiput(-2,-0.5)(-0.2,0.1){5}{\line(-1,0.5){0.1}}
	 \put(-1,0){\circle*{.2}}
	 \put(-3,0){\circle*{.2}}
	 \put(1,0){\circle*{.2}}
	 \put(3,0){\circle*{.2}}
	\put(-6,-1.5){\line(1,0){10}}
	\multiput(-4,-0.5)(0.4,0){22}{\line(1,0){0.2}}
	\put(-2,-1.5){\circle*{.2}}
	\put(-4,-1.5){\circle*{.2}}
	\put(0,-1.5){\circle*{.2}}
	\put(2,-1.5){\circle*{.2}}
	\put(-2,-0.5){\circle{.2}}
	\put(0,-0.5){\circle{.2}}
	\put(2,-0.5){\circle{.2}}
	\put(4,-0.5){\circle{.2}}
	\put(2,-1.5){\line(1,1.5){1}}
	\put(0,-1.5){\line(1,1.5){1}}
	\put(-2,-1.5){\line(1,1.5){1}}
	\put(-4,-1.5){\line(1,1.5){1}}
\end{picture}
\caption{The graph $\Z\mathop\square C_3$.}\label{fig:cyl}
\end{center}
\end{figure}

We can similarly construct $\Z^d\mathop\square G_F$ by replacing each vertex in $\Z^d$ by a copy of $G_F$ and connecting the matching vertices. For example, $\Z^2\mathop\square P_2$ consists of an infinite layer of cubes. Our results apply to all these graphs.

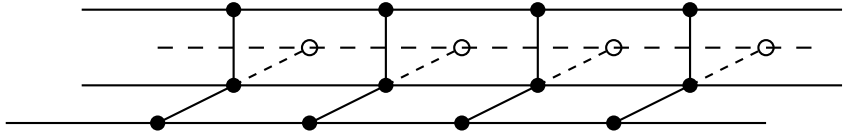
\begin{figure}[h!]
\begin{center}
\setlength{\unitlength}{1cm}
\thicklines
\begin{picture}(1.3,1.3)(-1.3,-1.3)
   \put(-5,0){\line(1,0){10}}
	 \put(-5,-1){\line(1,0){10}}
	 \put(-3,-1){\line(0,1){1}}
	 \put(-1,-1){\line(0,1){1}}
	 \put(1,-1){\line(0,1){1}}
	 \put(3,-1){\line(0,1){1}}
	 \put(1,-1){\line(-1,-0.5){1}}
	 \put(3,-1){\line(-1,-0.5){1}}
 	 \put(-1,-1){\line(-1,-0.5){1}}
	 \put(-3,-1){\line(-1,-0.5){1}}
	\multiput(-1,-1)(0.2,0.1){5}{\line(1,0.5){0.1}}
	\multiput(1,-1)(0.2,0.1){5}{\line(1,0.5){0.1}}
	\multiput(3,-1)(0.2,0.1){5}{\line(1,0.5){0.1}}
	\multiput(-3,-1)(0.2,0.1){5}{\line(1,0.5){0.1}}
	 \put(-1,-1){\circle*{.2}}
	 \put(-1,0){\circle*{.2}}
	 \put(-3,-1){\circle*{.2}}
	 \put(-3,0){\circle*{.2}}
	 \put(1,-1){\circle*{.2}}
	 \put(1,0){\circle*{.2}}
	 \put(3,-1){\circle*{.2}}
	 \put(3,0){\circle*{.2}}
	\put(-6,-1.5){\line(1,0){10}}
	\multiput(-4,-0.5)(0.4,0){22}{\line(1,0){0.2}}
	\put(-2,-1.5){\circle*{.2}}
	\put(-4,-1.5){\circle*{.2}}
	\put(0,-1.5){\circle*{.2}}
	\put(2,-1.5){\circle*{.2}}
	\put(-2,-0.5){\circle{.2}}
	\put(0,-0.5){\circle{.2}}
	\put(2,-0.5){\circle{.2}}
	\put(4,-0.5){\circle{.2}}
\end{picture}
\caption{The graph $\Z\mathop\square \star_3$, where $\star_k$ is the star graph with $k$ edges.}\label{fig:star}
\end{center}
\end{figure}

We can furthermore add a potential $Q$ on $G_F$ which is copied across the layers (i.e. $Q(n+v_p)=Q(v_p)$). In this way we obtain a Schr\"odinger operator $H_{\Z^d\mathop\square G_F}$ with a periodic potential.

\medskip

Denote the points on $\Z^d\mathop\square G_F$ by $v = n+v_p$, where $n\in \Z^d$ and $v_p\in G_F$.

\medskip

We may now state our main result. Let $H_{G_F} = \cA_{G_F}+Q$.

\begin{thm} \label{main}
We have for $H=H_{\Z^d\mathop\square G_F}$,
\begin{equation}\label{e:kermain}
\ee^{\ii tH}(n+v_p,m+v_q) = \Big(\prod_{j=1}^d \ii^{n_j-m_j} J_{n_j-m_j}(2t) \Big)  \ee^{\ii t H_{G_F}}(v_p,v_q)\,,
\end{equation}
where $J_k(t)$ is the Bessel function. This implies a dispersion bound of the form
\begin{equation}\label{e:dismain}
\|\ee^{\ii t H}f\|_\infty < \frac{1}{t^{d/3}} \|f\|_1 \qquad \forall \, t > 0
\end{equation}
for $f\in \ell^1(\Z^d\mathop\square G_F)$, which is sharp.
\end{thm}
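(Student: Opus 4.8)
The plan is to exploit the tensor-product structure of Cartesian products. Identifying $\ell^2(\Z^d\mathop\square G_F)$ with $\ell^2(\Z^d)\otimes\ell^2(V(G_F))$, the definition of the Cartesian product gives at once $\cA_{\Z^d\mathop\square G_F}=\cA_{\Z^d}\otimes I+I\otimes\cA_{G_F}$, and since the periodic potential depends only on the $G_F$-coordinate it is $I\otimes Q$; hence $H=\cA_{\Z^d}\otimes I+I\otimes H_{G_F}$. Both summands are bounded and self-adjoint and they commute (they act on different factors), so $\ee^{\ii tH}=\ee^{\ii t\cA_{\Z^d}}\otimes\ee^{\ii tH_{G_F}}$ and its kernel factorizes as $\ee^{\ii tH}(n+v_p,m+v_q)=\ee^{\ii t\cA_{\Z^d}}(n,m)\,\ee^{\ii tH_{G_F}}(v_p,v_q)$. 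The same reasoning applied to $\cA_{\Z^d}=\sum_{j=1}^d\cA_\Z^{(j)}$, a sum of $d$ commuting copies of the one-dimensional adjacency operator, yields $\ee^{\ii t\cA_{\Z^d}}(n,m)=\prod_{j=1}^d\ee^{\ii t\cA_\Z}(n_j,m_j)$, so \eqref{e:kermain} reduces to the single identity $\ee^{\ii t\cA_\Z}(n,m)=\ii^{n-m}J_{n-m}(2t)$.

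To obtain this last identity I would pass to the Fourier side $\ell^2(\Z)\cong L^2(\T)$, where $\cA_\Z$ becomes multiplication by $2\cos\theta$, so that $\ee^{\ii t\cA_\Z}(n,m)=\frac{1}{2\pi}\int_0^{2\pi}\ee^{2\ii t\cos\theta}\ee^{\ii(n-m)\theta}\,\dd\theta$; plugging in the Jacobi--Anger expansion $\ee^{\ii z\cos\theta}=\sum_{k\in\Z}\ii^k J_k(z)\,\ee^{\ii k\theta}$ with $z=2t$ and integrating term by term gives $\ii^{n-m}J_{n-m}(2t)$ (the relation $\ii^{-k}J_{-k}=\ii^k J_k$ showing the answer is symmetric in $n,m$). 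This settles \eqref{e:kermain}.

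For the dispersive bound \eqref{e:dismain} I would combine the trivial estimate $\|Af\|_\infty\le\big(\sup_{x,y}|A(x,y)|\big)\|f\|_1$ with two inputs: (i) $\ee^{\ii tH_{G_F}}$ is unitary on the \emph{finite}-dimensional space $\ell^2(V(G_F))$, hence $|\ee^{\ii tH_{G_F}}(v_p,v_q)|\le1$; and (ii) the sharp uniform Bessel bound $\sup_{k\in\Z}|J_k(s)|\le c\,s^{-1/3}$ for $s>0$, valid with a constant $c<2^{1/3}$ (one may take Landau's value $c=0.7858\ldots$). Together with \eqref{e:kermain} these give $\sup|\ee^{\ii tH}(n+v_p,m+v_q)|\le c^d(2t)^{-d/3}=(c/2^{1/3})^d\,t^{-d/3}<t^{-d/3}$, which is \eqref{e:dismain}.

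Finally, the exponent $d/3$ is optimal: testing on a single vertex $f=\delta_{0+v_p}$, formula \eqref{e:kermain} gives $\|\ee^{\ii tH}f\|_\infty=\big(\sup_{k\in\Z}|J_k(2t)|\big)^d\cdot\sup_{v_q}|\ee^{\ii tH_{G_F}}(v_q,v_p)|$, and since $\ee^{\ii tH_{G_F}}\delta_{v_p}$ is a unit vector of $\ell^2(V(G_F))$ the second factor is $\ge|V(G_F)|^{-1/2}$, while the classical lower bound $\sup_{k\in\Z}|J_k(s)|\ge c'\,s^{-1/3}$ for large $s$ (coming from the Airy-type behaviour of $J_k(s)$ near the turning point $k\approx s$, where $J_{\lfloor s\rfloor}(s)\asymp s^{-1/3}$) forces $\|\ee^{\ii tH}f\|_\infty\ge c''\,t^{-d/3}$ as $t\to\infty$ with $\|f\|_1=1$; hence no estimate $\|\ee^{\ii tH}f\|_\infty\le C t^{-\alpha}\|f\|_1$ with $\alpha>d/3$ can hold. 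The only delicate point is keeping track of the numerical constant in the uniform Bessel bound so that \eqref{e:dismain} comes out strict, and citing the precise two-sided order $\sup_{k}|J_k(s)|\asymp s^{-1/3}$; the rest is a routine unravelling of the tensor-product structure.
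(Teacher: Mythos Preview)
Your argument is correct, and it takes a genuinely different (and in fact more elementary) route than the paper.

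The paper proceeds via the Floquet theory of periodic operators: it introduces the unitary $U:\ell^2(\Gamma)\to\int_{\T_\ast^d}^\oplus\ell^2(G_F)\,\dd\theta$, computes the fiber operator $H(\theta)=\big(\sum_j 2\cos 2\pi\theta_j\big)\mathrm{Id}+H_{G_F}$, exponentiates on each fiber, and then recovers the kernel as $\int_{\T_\ast^d}\ee^{2\pi\ii\theta\cdot(n-m)}\ee^{\ii tH(\theta)}(v_p,v_q)\,\dd\theta$. This is, of course, exactly the Fourier transform on $\Z^d$ dressed up in direct-integral language; the fiber computation is where the Cartesian-product structure enters. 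You bypass all of this by writing $H=\cA_{\Z^d}\otimes I+I\otimes H_{G_F}$ directly and exponentiating the commuting summands, which gets to the Bessel integral in one line. Your path is shorter and needs no periodic-operator machinery; the paper's approach has the virtue of sitting inside a general framework that would also cover periodic graphs that are \emph{not} Cartesian products (where no tensor splitting is available).

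For sharpness the two arguments also differ. The paper argues indirectly, via a RAGE-theorem lemma showing that no dispersion is possible on a finite graph, hence the factor $\ee^{\ii tH_{G_F}}(v_p,v_q)$ cannot contribute extra decay. Your argument is more explicit: you test on $f=\delta_{0+v_p}$ and combine the two-sided order $\sup_k|J_k(s)|\asymp s^{-1/3}$ with the trivial bound $\|\ee^{\ii tH_{G_F}}\delta_{v_p}\|_\infty\ge |V(G_F)|^{-1/2}$. Both are valid; yours is quantitatively sharper, the paper's is phrased as a general principle.
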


The Schr\"odinger operator $H$ has purely absolutely continuous spectrum and is thus spectrally delocalized (see Section~\ref{sec:pro}). The transport is also ballistic, so waves travel at maximum speed \cite{BMS}. Here \eqref{e:dismain} shows that as the waves travel, they ``flatten out''. In fact, since $\ee^{\ii tH}$ is unitary, $\|\ee^{\ii tH} f\|_2=\|f\|_2$ is conserved, so the only way that the supremum norm can decay with time is that the wave spreads out. Dispersion excludes the possibility that $|\ee^{\ii tH} f(x)|^2$ is a ``sliding bump'' of the form $\psi(x-t)$ for some $\psi\ge 0$ of compact support.

\medskip

We mention that $H$ is also spatially delocalized in the sense that its (generalized) eigenvectors have a kind of equidistribution in space. More precisely, quantum ergodicity holds partially on large finite subgraphs. See \cite{MS} for details in a more general framework.

\medskip

Dispersion on (discrete) graphs has not been extensively studied to our knowledge. We mention the adjacency matrix on $\Z^d$ in \cite{SK} with sharp speed $t^{-d/3}$, on the $(q+1)$-regular tree $\T_q$ with sharp speed $t^{-3/2}$ \cite{AS,E,S}, the integer line $\Z$ with a periodic potential taking $2$ values \cite{MZ}, more recently taking $k$ values \cite{MZ2}, with upper bounds $t^{-1/3}$ and $t^{-1/(k+1)}$ on the speed, respectively. The paper \cite{EKT} considers the case of $\Z$ with an $\ell^1$ potential and derive the speed $t^{-1/3}$. Finally, \cite{IS} considers two coupled discrete Schr\"odinger equations on $\Z$ and obtain again the speed $t^{-1/3}$.

\begin{figure}[h!]
\begin{center}
\setlength{\unitlength}{1cm}
\thicklines
\begin{picture}(1,1)(-1,-1)
   \put(-5,0){\line(1,0){10}}
	 \put(-5,-1){\line(1,0){10}}
	 \put(-3,-1){\line(0,1){1}}
	 \put(-1,-1){\line(0,1){1}}
	 \put(1,-1){\line(0,1){1}}
	 \put(3,-1){\line(0,1){1}}
	 \put(-1,-1){\circle*{.2}}
	 \put(-1,0){\circle{.2}}
	 \put(-3,-1){\circle*{.2}}
	 \put(-3,0){\circle{.2}}
	 \put(1,-1){\circle*{.2}}
	 \put(1,0){\circle{.2}}
	 \put(3,-1){\circle*{.2}}
	 \put(3,0){\circle{.2}}
\end{picture}
\caption{The ladder graph, $\Z\mathop\square P_2$, endowed two potentials $Q_\bullet,Q_\circ$.}\label{fig:lad}
\end{center}
\end{figure}
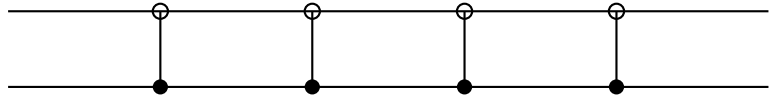

The results of \cite{MZ,MZ2} can be compared to those of a ladder graph (Fig.~\ref{fig:lad}) and a $k$-strip, endowed periodic potentials $Q_1,\dots,Q_k$ coming in \emph{parallel sheets}. Our proof shows that it is easier to prove dispersion on these graphs than on the simple one-dimensional line $\Z$ taking \emph{successive} potentials $Q_1,\dots,Q_k$.

\section{Proof of Theorem \ref{main}}\label{sec:pro}

Endow the finite graph $G_F$ with a potential $Q$ which is copied across the $G_F$ layers. By construction, the graph $\Gamma=\Z^d\mathop\square G_F$ is periodic, with fundamental crystal $G_F$. In other words, the Cartesian product is obtained by translating the fundamental domain $V_f=G_F$ under the action of $\Z^d$. This is a special case of the periodic graphs considered in \cite{KorSa}. As such, the Floquet theory applies to them. Let us discuss this in more detail.

\medskip

Due to translation invariance, we have
\[
\Gamma = \Z^d+G_F
\]
so that points in $\Gamma$ take the form $k+v_n$ for some $k\in \Z^d$ and $v_n\in G_F$. Here $k$ can be regarded as an ``integer part'' while $v_n$ a ``fractional part'' in the fundamental domain.

\medskip

Let us define the operator $U:\ell^2(\Gamma)\to \int_{\T_\ast^d}^\oplus \ell^2(G_F) \dd\theta$, where $\T_\ast^d=[0,1)^d$, by 
\[
(U\psi)_\theta(v_n) = \sum_{k\in \Z^d} \ee^{-2\pi \ii\theta\cdot k} \psi(k+v_n) \,.
\]
Then $U$ is unitary, with inverse $U^{-1}:(g_\theta)\mapsto \int_{\T_\ast^d} g_\theta(v_n)\ee^{2\pi\ii k\cdot \theta}\,\dd \theta$, and
\begin{equation}\label{e:uniquev}
UHU^{-1} = \int_{\T_\ast^d}^\oplus H(\theta)\,\dd\theta\,,
\end{equation}
where
\[
H(\theta)f(v_n) = \sum_{u\sim v_n}\ee^{2\pi \ii\theta\cdot \lfloor u\rfloor} f(\{u\}) + Q(v_n)f(v_n)\,,
\]
see \cite[\S 3.2]{BMS} for details.\footnote{In \cite{BMS}, the unitary operator is instead $(\widetilde{U}\psi)_\theta(v_n)=\ee^{-2\pi\ii\theta\cdot v_n}(U\psi)_\theta(v_n)$, so the fiber operator obtained there is $\widetilde{H}(\theta) = \ee^{-2\pi\ii\theta\cdot}H(\theta)\ee^{2\pi\ii\theta\cdot}$, where $\ee^{\pm 2\pi\ii\theta\cdot}f(v_p)=\ee^{2\pi\ii\theta\cdot v_p}f(v_p)$. This operator and ours are thus unitarily equivalent, the present version is just more suitable for our computations.} Here $\lfloor k+v_p\rfloor = k$ and $\{k+v_p\} = v_p$ are the integer and fractional parts, respectively, and the sum runs over the neighbors of $v_n$ in all $\Gamma$, not just $G_F$.

\medskip

By definition of the Cartesian product, the neighbors of $v_n = v_n+0$ are of two kinds~: those of the form $v_n\pm \mathfrak{e}_j$, where $\mathfrak{e}_j$ is the standard basis of $\Z^d$ (the neighbors of $0$ in $\Z^d$) and those of the form $v_p+0=v_p$, where $v_p\sim v_n$ in $G_F$. The operator thus simplifies to
\[
H(\theta)f(v_n) = \Big(\sum_{j=1}^d 2\cos 2\pi\theta_j\Big)f(v_n) + \sum_{v_p\sim v_n} f(v_p) + Q(v_n) f(v_n)\,,
\]
where we used that $\lfloor v_n \pm \mathfrak{e}_j\rfloor = \pm \mathfrak{e}_j$, $\{v_n\pm \mathfrak{e}_j\}=v_n$, $\lfloor v_p\rfloor = 0$, $\{v_p\}=v_p$.

\medskip

Thus, if $H_{G_F} = \cA_{G_F}+Q$ is the Schr\"odinger operator of the finite graph $G_F$, then in our setting, the fiber operator $H(\theta)$ is simply
\[
H(\theta) = \Big(\sum_{j=1}^d 2\cos(2\pi\theta_j)\Big)\cdot \mathrm{Id} + H_{G_F}\,.
\]
In other words, $H(\theta)$ is just $H_{G_F}$ shifted by a scalar. The Floquet eigenvalues take the form $E_s(\theta)=(\sum_{j=1}^d 2\cos(2\pi\theta_j))+\mu_s$, where $\mu_s$ are the eigenvalues of $H_{G_F}$, in particular $E_s(\theta)$ are analytic and non-constant, so the spectrum is purely absolutely continuous (see \cite[Section XIII.16]{RS4}). Moreover,
\begin{equation}\label{e:expfor}
\ee^{\ii t H(\theta)} = \ee^{2\ii t\sum_{j=1}^d \cos(2\pi\theta_j)}\ee^{\ii tH_{G_F}}\,.
\end{equation}

Finally, by \eqref{e:uniquev},
\begin{align*}
\ee^{\ii t H}(n+v_p,m+v_q) &= \langle U\delta_{n+v_p}, U\ee^{\ii tH}U^{-1}U\delta_{m+v_q}\rangle \\
&= \int_{\T_\ast^d}\sum_{v_r\in G_F} \overline{(U\delta_{n+v_p})_\theta(v_r)}(\ee^{\ii tH(\theta)}U\delta_{m+v_q})_\theta(v_r)\,\dd\theta \,.
\end{align*}
But $$(U\delta_{n+v_p})_\theta(v_r) = \sum_{k\in \Z^d} \ee^{-2\pi\ii\theta\cdot k}\delta_{n+v_p}(k+v_r) = \ee^{-2\pi\ii\theta\cdot n}\delta_{v_p,v_r}.$$ Hence,
\begin{align*}
\ee^{\ii t H}(n+v_p,m+v_q) &= \int_{\T_\ast^d} \sum_{v_s\in G_F}\ee^{2\pi\ii\theta\cdot n} \ee^{\ii tH(\theta)}(v_p,v_s)(U\delta_{m+v_q})(v_s)\,\dd\theta\\
&=\int_{\T_\ast^d} \ee^{2\pi\ii \theta\cdot(n-m)}\ee^{\ii tH(\theta)}(v_p,v_q)\,\dd\theta \,.
\end{align*}

Recalling \eqref{e:expfor}, this becomes
\[
\ee^{\ii t H}(n+v_p,m+v_q) = \left(\int_{\T_\ast^d}\ee^{2\pi\ii\theta\cdot (n-m)}\ee^{2\ii t\sum_{j=1}^d \cos(2\pi\theta_j)}\,\dd\theta\right)\ee^{\ii t H_{G_F}}(v_p,v_q)\,,
\]
which is \eqref{e:kermain}, since $\int_0^1 \ee^{2\pi \ii \nu x}\ee^{\ii t \cos(2\pi x)}\,\dd x = \ii^\nu J_\nu(t)$.

\medskip

This proves our main formula. From here dispersion follows from known estimates. In fact, by \cite{Landau}, we know that $|J_\nu(t)|\le c t^{-1/3}$, with $c\approx 0.786$ and that this estimate is sharp in the speed $t^{-1/3}$ (and in the constant). On the other hand, $|\ee^{\ii tH_{G_F}}(v_p,v_q)|\le \|\ee^{\ii tH_{G_F}}\| =1$. Since
\[
\|\ee^{\ii tH}f\|_\infty = \sup_{v\in \Gamma} \Big|\sum_{w\in \Gamma} \ee^{\ii tH}(v,w)f(w)\Big|\le \sup_{v,w\in \Gamma}|\ee^{\ii tH}(v,w)|\cdot \|f\|_1\,,
\]
the theorem follows.
\qed

\medskip

Note that the term $\ee^{\ii t H_{G_F}}(v_p,v_q)$ cannot improve dispersion as it is the evolution kernel of a Schr\"odinger operator on a finite graph, in particular has point spectrum and is dynamically localized. More precisely, if we had $\max_{v_p,v_q}|\ee^{\ii tH_{G_F}}(v_p,v_q)|\le \frac{c}{f(t)}$ for some $f(t)\to \infty$, this would imply $\|\ee^{\ii t H_{G_F}} \psi\|_\infty \le \frac{c}{f(t)}\|\psi\|_1$, and this would contradict the following lemma. This shows that the speed $t^{-d/3}$ cannot be improved by varying $G_F$.

\begin{lem}
There is no dispersion on finite graphs. More precisely, if $\|\psi\|_2=1$, it is impossible to find $f(t)\to \infty$ such that $\|\ee^{\ii tH_{G_F}} \psi\|_\infty \le \frac{c}{f(t)}\|\psi\|_1$.
\end{lem}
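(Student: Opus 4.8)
\emph{The approach.} On a finite graph dispersion is precluded for an essentially trivial reason: the Hilbert space $\ell^2(G_F)$ is finite-dimensional, so its $\ell^\infty$, $\ell^2$ and $\ell^1$ norms are comparable, and $\ee^{\ii tH_{G_F}}$ is unitary on $\ell^2(G_F)$, hence conserves the $\ell^2$ norm. Combining these two facts immediately pins $\|\ee^{\ii tH_{G_F}}\psi\|_\infty$ from below by a positive constant for all $t$, while the right-hand side of the conjectured bound would tend to $0$.

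\emph{The steps.} Write $N=|V(G_F)|$. First I would record the elementary comparison $\|g\|_2^2=\sum_{v\in G_F}|g(v)|^2\le N\|g\|_\infty^2$, i.e.\ $\|g\|_\infty\ge N^{-1/2}\|g\|_2$ for every $g\in\ell^2(G_F)$, and likewise $\|g\|_1\le N^{1/2}\|g\|_2$ by Cauchy--Schwarz. Next, fix $\psi$ with $\|\psi\|_2=1$. Since $\ee^{\ii tH_{G_F}}$ is unitary, $\|\ee^{\ii tH_{G_F}}\psi\|_2=\|\psi\|_2=1$ for every $t$, so
\[
\|\ee^{\ii tH_{G_F}}\psi\|_\infty\ \ge\ N^{-1/2}\qquad\text{for all }t.
\]
Finally, $\|\psi\|_1$ is a fixed finite constant independent of $t$ (indeed $\|\psi\|_1\le N^{1/2}$). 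Hence if $f(t)\to\infty$, then $\tfrac{c}{f(t)}\|\psi\|_1\to 0$, and for all large $t$ this quantity is strictly below $N^{-1/2}\le\|\ee^{\ii tH_{G_F}}\psi\|_\infty$, so the inequality $\|\ee^{\ii tH_{G_F}}\psi\|_\infty\le \tfrac{c}{f(t)}\|\psi\|_1$ fails. This contradiction proves the lemma.

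\emph{The main obstacle.} There is essentially none: the only thing to notice is that the crude lower bound coming from unitarity and finite dimension already suffices, so one does \emph{not} need the almost-periodicity of $t\mapsto\ee^{\ii tH_{G_F}}$ or a Poincar\'e-type recurrence statement. If a sharper conclusion were wanted---for instance that $\ee^{\ii tH_{G_F}}\psi$ returns arbitrarily close to $\psi$ along some sequence $t_n\to\infty$---one could expand $\psi$ in an orthonormal eigenbasis $(\phi_s)$ of $H_{G_F}$ with eigenvalues $(\mu_s)$ and apply simultaneous Diophantine approximation (Dirichlet's theorem) to the finitely many reals $\mu_s$ to make $|\ee^{\ii t_n\mu_s}-1|$ uniformly small; but this refinement plays no role in the statement above.
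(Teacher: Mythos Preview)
Your argument is correct and is strictly more elementary than the paper's. The paper proceeds by contradiction too, but invokes the RAGE theorem: since $H_{G_F}$ has pure point spectrum, one can find a compact set $K$ with $\sup_t\|\chi_{K^c}\psi_t\|_2<\varepsilon$, hence $\|\chi_K\psi_t\|_2^2\ge 1-\varepsilon^2$; bounding $\|\chi_K\psi_t\|_2^2$ above in terms of $\|\psi_t\|_\infty$ and $|K|$ then forces $|K|\to\infty$, a contradiction. On a finite graph this machinery is redundant, since one may simply take $K=G_F$ itself, after which the paper's estimate collapses exactly to your comparison $\|\psi_t\|_2^2\le N\|\psi_t\|_\infty^2$. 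What the paper's formulation buys is that its argument transfers verbatim to any self-adjoint operator with pure point spectrum on a possibly infinite graph; what your approach buys is transparency and the avoidance of an external theorem for a statement whose proof needs only unitarity and the equivalence of norms in finite dimension.
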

\begin{proof}
Let $\psi_t = \ee^{\ii tH_{G_F}}\psi$. Suppose on the contrary that $\|\psi_t\|_\infty \le \frac{c}{f(t)}\|\psi\|_1$. Let $0<\varepsilon<1$. By the RAGE theorem \cite{Teschl}, we can find a compact $K$ such that $\sup_t\|\chi_{K^c}\psi_t\|<\varepsilon$. So $\|\chi_K\psi_t\|^2 = \|\psi_t\|^2-\|\chi_{K^c}\psi_t\|^2\ge 1-\varepsilon^2$. But
\[
\|\chi_K\psi_t\|^2\le \|\psi_t\|_\infty\cdot |K| \le c\frac{\|\psi\|_1}{f(t)}\cdot |K|\,.
\]
It follows that $|K|\ge c'f(t)(1-\varepsilon^2)$. Taking $t\to\infty$ yields a contradiction since $K$ is compact. 
\end{proof}

\providecommand{\bysame}{\leavevmode\hbox to3em{\hrulefill}\thinspace}
\providecommand{\MR}{\relax\ifhmode\unskip\space\fi MR }
\providecommand{\MRhref}[2]{%
  \href{http://www.ams.org/mathscinet-getitem?mr=#1}{#2}
}
\providecommand{\href}[2]{#2}

\end{document}